\newif\ifAMS
\AMStrue\usepackage{amssymb}}
\theoremstyle{plain}
\newtheorem{Thm}{Theorem}[section]
\newtheorem{Cor}[Thm]{Corollary}
\newtheorem{Lem}[Thm]{Lemma}
\theoremstyle{definition}
\newtheorem{Def}{Definition}
\theoremstyle{remark}
\newtheorem{Rem}{Remark}
\DeclareMathOperator{\length}{length}
\newcommand{\interior}{^{ \kern-5pt ^\circ}}
\newcommand {\Z}{{\mathbb Z}}
\begin{document}
\title
{Periodic geodesics in singular spaces}

\author
{Panos Papasoglu, Eric Swenson }

\subjclass{}

\address  [Panos Papasoglu]
{Mathematical Institute, University of Oxford, Andrew Wiles Building, Woodstock Rd, Oxford OX2 6GG, U.K.  }
\email {} \email [Panos Papasoglu]{papazoglou@maths.ox.ac.uk}

\address
[Eric Swenson] {Mathematics Department, Brigham Young University,
Provo UT 84602}
\email [Eric Swenson]{eric@math.byu.edu}


\begin{abstract} 
We extend the classical result of Lyusternik and Fet on the existence of closed geodesics to singular spaces. We show that if $X$ is a compact geodesic metric space satisfying the CAT($\kappa $) condition for some fixed $\kappa >0$ and $\pi _n(X)\ne 0$ for some $n>0$ then $X$ has a periodic geodesic. This condition is satisfied
for example by locally CAT($\kappa $) manifolds.
Our result applies more generally to compact locally uniquely geodesic spaces.
\end{abstract}
\maketitle
\section{Introduction}

The question of the existence of periodic geodesics in closed Riemannian manifolds was first
considered by Poincar\'e in \cite {Po}. Birkhoff \cite{Bi} proved existence of periodic geodesics
for the sphere $S^n$ and Fet-Lyusternik extended this to every closed Riemannian manifold \cite{FL}.
For a review on the subject we refer to section 10.4 of \cite {Be}. 

Crucial to the existence result is Birkhoff's shortening process. We refer to \cite {Kl} appendix A
for an exposition of the Fet-Lyusternik result. Another more modern exposition of the topic is given in ch. 5 of \cite{CM}.

It is natural to ask whether the existence result for periodic geodesics applies to wider classes of spaces
and whether this really requires a Riemannian metric. All existing proofs to our knowledge rely on analytic methods where a Riemannian metric appears to be necessary.

Gruber in \cite{Gr} showed that generically boundaries of convex bodies in $\mathbb R ^3$ do not have any periodic geodesics. This shows that one does need to impose some restrictions on the space.

We show in this paper that one can prove the existence result under a quite weak assumption.
One needs only assume that $X$ is a geodesic metric space homeomorphic to a close manifold
with the property that there is an $\epsilon >0$ so that any two points at distance $\leq \epsilon $ are joined by a unique shortest path. In fact the hypothesis that $X$ is homeomorphic to a closed manifold
is not needed either, it suffices to assume that $\pi _n(X)\ne 0$ for some $n>0$.

Our proof of this more general result is geometric and in some ways simpler than previous proofs.

It applies in particular to closed manifolds with curvature bounded above in the sense of Alexandrov, i.e. manifolds that
satisfy locally the $CAT(\kappa )$ condition. These spaces are extensively studied, we refer to the classic \cite{AZ} and to the more recent text \cite{BH} for the foundations
and to \cite{AKP}, ch II, sec 9 for an up to date exposition.
The second author wishes to thank the Max Planck Institute for Mathematics for their support while working on this result. We thank S. Sabourau for many useful comments on a first draft of our paper.

\section {Preliminaries}

Let $X$ be a metric space. We recall that the length of a continuous path $\gamma :[0,\ell]\to X$
is defined as
$$\length (\gamma )=\sup \{\sum _{i=1}^nd(\gamma (t_i),\gamma (t_{i+1})):\ \ 0=t_1<...<t_{n+1}=\ell , n\in \mathbb N\}.$$

\begin{Def}  Let $X$ be a metric space. A continuous path $\gamma :[0,\ell]\to X$
is called a {\it shortest path} if $$d(\gamma (0),\gamma (\ell))=\length (\gamma ).$$ 

We say that the path $\gamma $ is a {\it geodesic} if there is an $\epsilon $ 
such that $$d(\gamma (t),\gamma (s))=\length (\gamma |_{[t,s]})$$
if $d(\gamma (t),\gamma (s))<\epsilon $.

We define similarly what it mean to be a geodesic for
paths $\gamma :S^1\to X$ and we call such paths {\it periodic geodesics}.

We say that $X$ is a {\it geodesic metric space} if any two points in $X$ can be joined by a shortest path.

We say that the path  $\gamma :[0,\ell]\to X$ is a {\it piecewise shortest path} if there is a partition of $[0,\ell]$
and $\gamma $ is a shortest path on each closed interval of the partition.

Let $X$ be a compact geodesic metric space. We say that $X$ is {\it  locally uniquely geodesic} if there is an $\epsilon >0 $ such that any two points
$x,y\in X$ with $d(x,y)\leq \epsilon $ can be joined by a unique shortest path. We will also use $\epsilon$-locally uniquely geodesic when we wish to fix the $\epsilon$ in question.

\end{Def}

It is convenient to parametrize geodesics by arc-length or proportionally to arc length. We will do this from now on, so when we state that $\gamma :[0,1]\to X$ is a piecewise shortest path or a geodesic it will be implicit that
$\gamma $ is parametrized proportionally to arc length, unless we specify a different parametrisation.

\begin{Rem} By \cite[II Proposition 1.4 (1)]{BH} $CAT(\kappa)$ spaces are locally uniquely geodesic. So Riemannian manifolds and Riemannian polyhedra satisfying the $CAT(\kappa )$ condition are locally uniquely geodesic.
\end{Rem}

We recall that a metric space is called {\it proper} if closed balls are compact.

\begin{Lem} \label{close} Let $X$ be a geodesic, proper, and $\epsilon $-locally uniquely geodesic metric space. 
Let $a_n,b_n\in X$ such that $a_n\to a, b_n\to b$ and $d(a_n,b_n)<\epsilon$ for all $n$.
If $\gamma _n:[0,1]\to X$ are shortest paths joining $a_n$ to $b_n$ then $\gamma _n$ converges uniformly
to the unique shortest path $\gamma $ joining $a,b$.

\end{Lem}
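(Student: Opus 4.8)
The plan is to use an Arzelà–Ascoli argument together with the uniqueness clause in the definition of locally uniquely geodesic. First I would observe that, since $d(a_n,b_n)<\epsilon$ for all $n$, each $\gamma_n$ lies in the closed ball $\cl B(a_n, \epsilon)$, and since $a_n\to a$, all these balls are contained in a fixed closed ball $\cl B(a, \epsilon+1)$ for $n$ large. As $X$ is proper, this ball is compact. Because each $\gamma_n$ is a shortest path parametrized proportionally to arc length on $[0,1]$, with $\length(\gamma_n)=d(a_n,b_n)<\epsilon$ bounded, the maps $\gamma_n$ are uniformly Lipschitz, hence equicontinuous. By the Arzelà–Ascoli theorem, some subsequence $\gamma_{n_k}$ converges uniformly to a continuous path $\gamma\colon[0,1]\to X$.

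Next I would check that the limit $\gamma$ is the unique shortest path from $a$ to $b$. Uniform convergence gives $\gamma(0)=\lim a_{n_k}=a$ and $\gamma(1)=\lim b_{n_k}=b$. Lower semicontinuity of length under uniform convergence yields $\length(\gamma)\le\liminf_k\length(\gamma_{n_k})=\liminf_k d(a_{n_k},b_{n_k})=d(a,b)$, and since $\length(\gamma)\ge d(\gamma(0),\gamma(1))=d(a,b)$ always, we get $\length(\gamma)=d(a,b)$, so $\gamma$ is a shortest path joining $a$ and $b$. Now $d(a,b)=\lim d(a_{n_k},b_{n_k})\le\epsilon$; one must rule out equality if $\epsilon$-local uniqueness is only asserted for $d\le\epsilon$ — which it is in the stated definition — so $d(a,b)\le\epsilon$ suffices, and by the $\epsilon$-locally uniquely geodesic hypothesis $\gamma$ is the unique shortest path from $a$ to $b$.

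Finally I would upgrade subsequential convergence to convergence of the full sequence by the standard subsequence argument: if $\gamma_n$ did not converge uniformly to $\gamma$, there would be $\delta>0$ and a subsequence with $\sup_t d(\gamma_n(t),\gamma(t))\ge\delta$; applying the Arzelà–Ascoli step to that subsequence produces a further subsequence converging uniformly to \emph{some} shortest path from $a$ to $b$, which by uniqueness must be $\gamma$, contradicting the choice of $\delta$. Hence $\gamma_n\to\gamma$ uniformly.

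The main obstacle, such as it is, is bookkeeping rather than mathematics: making sure the paths genuinely lie in one fixed compact set (handled by the triangle inequality and properness) and that the reparametrization conventions make the $\gamma_n$ equicontinuous (handled because each is $\length(\gamma_n)$-Lipschitz on $[0,1]$ with $\length(\gamma_n)<\epsilon$). The one genuine point to be careful about is that the uniqueness hypothesis is only for pairs at distance $\le\epsilon$, so I must confirm $d(a,b)\le\epsilon$ before invoking it; this follows immediately from $d(a_n,b_n)<\epsilon$ by continuity of the metric.
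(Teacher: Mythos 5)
Your proof is correct and follows essentially the same route as the paper: uniform Lipschitz bound (from constant-speed parametrization and $\length(\gamma_n)<\epsilon$), Arzel\`a--Ascoli via properness, identification of any subsequential limit as the unique shortest path from $a$ to $b$, and the standard subsequence trick to get convergence of the whole sequence. If anything, your version is slightly more careful than the paper's, invoking lower semicontinuity of length and explicitly checking $d(a,b)\le\epsilon$ before using the uniqueness hypothesis.
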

\begin{proof}
The geodesics the $\gamma_n$ are all Lipschitz with constant $\epsilon$, and so equicontinuous.  
Since $X$ is proper, by Arzela-Ascoli,  a subsequence of $(\gamma _n)$ converges uniformly to a path $\beta $ joining $a,b$.
Since $\length(\gamma_n) =d(a_n,b_n) \to d(a,b)$ and  $\length (\gamma _n)\to \length (\beta)$,   $\beta $ is the unique shortest path joining $a$ and $b$.
Since $X$ is locally uniquely geodesic $\beta =\gamma $. If $\gamma _n$ does not converge uniformly to $\beta $
then a subsequence of $\gamma _n$ converges uniformly to a shortest path different from $\beta $, contradicting uniquely geodesic.
\end{proof}
It follows that shortest paths of length less than $\epsilon$ vary continuously with their endpoints in an $\epsilon$-locally uniquely geodesic space.
\section{ The Birkhoff shortening process} 

We generalize below the Birkhoff shortening process in the context of $\epsilon $-locally uniquely geodesic metric space. We note that Bowditch \cite{Bo} has generalized this to CAT(1) spaces.

Let $X$ be an $\epsilon $-locally uniquely geodesic metric space and let $\gamma :[0,1]\to X$ be a continuous closed path. 
Let $k$ be an integer such that the set $\gamma [ t ,t +\frac 1k]$ (with $t+\frac 1 k$ taken mod 1 if needed) has diameter  less than $\frac \epsilon 2 $ for all $t \in [0,1]$ .

We define a process that will shorten this curve in two stages. In the first stage we consider all
integers $0\le i<k$ and define a homotopy $R_t$, where $t\in [0,\frac 1 k]$ by replacing the interval $\gamma ( [\frac {2i}{2k},\frac{2i}{2k}+t])$ with the shortest path with the same endpoints. For $t=\frac 1 k$ we obtain a path $\gamma _1$ consisting
of the unique shortest paths from $ \gamma(\frac {2i}{2k})$ to $\gamma(\frac {2i+2}{2k})$ for each $0\le i<k$. 

We parametrize each shortest path proportionally to arc length so that
$\gamma  _1 (\frac {2i}{2k})=\gamma (\frac{2i}{2k})$ for each integer  $0\le i<k$. Since $\gamma_1$ is piecewise constant speed,  $\gamma_1$ is Lipschitz (even though $\gamma$ may not have been Lipschitz at all) with constant equal to the maximal speed 

\begin{equation} \label{Eq:speed} k \max\limits_{0\le i<k}  d\left(\gamma\left(\frac {2i}{2k}\right), \gamma\left(\frac {2i+2}{2k}\right)\right). \end{equation}

 If follows immediately that if $\gamma$ was Lipschitz with constant $\mu$ then $\gamma_1$ is Lipschitz with constant $\mu$.   

For integers $0 \le i <k$, define the homotopy
$S_t$ ,  $t\in [0,\frac 1 k]$ by replacing the interval $\gamma _1 ( [\frac {2i+1}{2k},\frac{2i+1}{2k} +t])$ with the shortest path with the same endpoints (where we consider the numbers $\mod 1$ so $0\in [\frac{2k-1}{2k},\frac 1{2k}]$). So $S_{1/k}(\gamma  _1)$ is the path consisting of shortest paths from
$\gamma_1(\frac{2i+1}{2k})$ to $\gamma_1(\frac {2i+3}{2k})$, where the numbers are taken mod 1 and $0\le i <k$.  As before, if $\gamma_1$ was Lipschitz with constant $\mu$, so is $S_{1/k}(\gamma  _1)$.

$S_{1/k}(\gamma  _1)$ is the outcome of the Birkhoff shortening process. We denote by $D_t$ the homotopy from $\gamma $ to $S_{1/k}(\gamma  _1)$ and we set
 $D(\gamma )=S_{1/k}(\gamma  _1)$.  Notice that we have shown that if $\gamma$ was Lipschitz with constant $\mu$ so is $D(\gamma)$

 \section{A foliation of the sphere} 

We will need a standard foliation of the sphere $S^n$ by circles that we describe now ($n\geq 2$).
Let $S^n\subseteq \mathbb R^{n+1}$ be the standard sphere. If $e_1,...,e_{n+1}$ is the standard basis of $\mathbb R^{n+1}$  we denote by $P_{x_1,...,x_{n-1}}$ the affine plane perpendicular
to $V=span\,(e_1,...,e_{n-1})$ which intersects $V$ at $(x_1,....,x_{n-1},0,0)$. Each plane $P_{x_1,...,x_{n-1}}$ intersects $S^n$ along a circle or a point (or has empty intersection).

We would like to pick base points on the circles of the foliation in a continuous fashion.

We pick the base point of each circle  in the foliation to be the unique point with coordinates
satisfying $x_n=0,x_{n+1}\geq 0$.  We now parameterize the the intersection $S^n \cap P_{x_1,...,x_{n-1}}$ explicitly.
So assuming $S^n \cap P_{x_1,...,x_{n-1}} \neq \emptyset$, then let $r = \sqrt{ 1- \sum_{i=1}^{n-1} x_i^2}$, with $0 \le r \le 1$.
Now we define $\phi_{x_1,...,x_{n-1}}:[0,1] \to S^n$ by $$\phi_{x_1,...,x_{n-1}}(t) = (x_1,\dots, x_{n-1}, r\sin2\pi t, r\cos 2\pi t)$$
Thus $\phi_{x_1,...,x_{n-1}}(0) =\phi_{x_1,...,x_{n-1}}(1) = (x_1,\dots, x_{n-1}, 0, r)$ our chosen base point.
We note that the set of base points is equal to a closed half sphere
of dimension $n-1$, so it is homeomorphic to a disk $B^{n-1}$.

\section{Periodic geodesics in compact spaces}

\begin{Thm} Let $X$ be a compact locally uniquely geodesic metric space with $\pi _n(X)\ne 0$
for some $n\geq 1$. Then $X$ contains a periodic geodesic.
\end{Thm}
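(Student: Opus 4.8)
The plan is to run a Birkhoff‑type minimax over a one–parameter family (a "sweepout") of loops associated to the nonzero class in $\pi_n(X)$, combining the shortening operator $D$ of the previous section with the compactness of $X$ and the continuity of short shortest paths from Lemma~\ref{close}. First I would fix a continuous $f\colon S^n\to X$ representing a nonzero element of $\pi_n(X)$ (for $n=1$ this is just a noncontractible loop). Composing $f$ with the maps $\phi_{x_1,\dots,x_{n-1}}$ of the foliation of $S^n$ by circles yields a continuous family of loops $h^f$ parametrized by the disk $B^{n-1}$ of basepoints, which is constant over $\partial B^{n-1}$ (where the circles degenerate); conversely, any continuous family $h$ of loops over $B^{n-1}$ that is constant over $\partial B^{n-1}$ reassembles into a map $\widehat h\colon S^n\to X$, and $\widehat{h^f}=f$. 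Call $h$ \emph{admissible} if $\widehat h\simeq f$, and set
\[
W=\inf\Big\{\,\sup_{s\in B^{n-1}}\length\big(h(s)\big)\ :\ h\ \text{admissible}\,\Big\}.
\]
A coning argument shows $W>0$: if some admissible $h$ had all its loops of length $<\epsilon$ (the constant of local unique geodesicity), then each point of each loop $h(s)$ lies within $\epsilon$ of its basepoint, hence is joined to it by a unique shortest path; by Lemma~\ref{close} these vary continuously in the loop parameter and in $s$, so coning fills in the circles of the foliation and contracts $\widehat h\simeq f$, a contradiction.

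Next I would manufacture a compact space of loops on which $D$ acts nicely. Fix an integer $k_0>2(W+1)/\epsilon$, and let $\mathcal Q$ be the set of loops in $X$, parametrized proportionally to arc length, that are concatenations of at most $2k_0$ shortest paths each of length $\le\epsilon/2$ and of total length $\le W+1$. Each $\gamma\in\mathcal Q$ is $(W+1)$‑Lipschitz, so Birkhoff's process applies with the single parameter $k_0$, and (after reparametrizing proportionally to arc length) its output $D\gamma$ again lies in $\mathcal Q$, with $\length(D\gamma)\le\length(\gamma)$. Using Lemma~\ref{close} one checks that $\mathcal Q$ is compact in the uniform topology (a uniform limit of such concatenations is again one, since short shortest paths are unique and vary continuously with their endpoints), that $\length$ is continuous on $\mathcal Q$, and that $\gamma\mapsto D\gamma$ and $\gamma\mapsto\length(D\gamma)$ are continuous on $\mathcal Q$. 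The crucial structural point is this: if $\gamma\in\mathcal Q$ and $\length(D\gamma)=\length(\gamma)$, then equality in the triangle inequality forces every sub‑arc of $\gamma$ altered in either of the two stages of the process to be already a shortest path; since these sub‑arcs overlap so as to cover the circle by shortest sub‑arcs of parameter length $1/(2k_0)$, the loop $\gamma$ is a periodic geodesic — and it is non‑constant precisely when $\length(\gamma)>0$.

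Now I would run the shortening argument. Suppose, for contradiction, that $X$ has no non‑constant periodic geodesic. Then $\length(D\gamma)<\length(\gamma)$ for every $\gamma\in\mathcal Q$ with $\length(\gamma)>0$. Fix $\delta_0\in(0,W)$; the set $\mathcal K=\{\gamma\in\mathcal Q:\length(\gamma)\ge W-\delta_0\}$ is compact, and the continuous function $\length-\length\circ D$ is strictly positive on it, hence bounded below there by some $\eta>0$. Take a minimizing sequence of admissible families; after replacing each loop by a piecewise‑shortest polygon through finitely many of its points — a homotopy performed exactly as in the first stage of Birkhoff's process, which keeps constant loops constant and does not increase lengths — I may assume the families $h_j$ take values in $\mathcal Q$ and $W_j:=\sup_s\length(h_j(s))\to W$. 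Each $D\circ h_j$ is still admissible, because $D$ is homotopic to the identity through the homotopy $D_t$, which preserves constant loops; hence $\sup_s\length(Dh_j(s))\ge W$. On the other hand, for $s$ with $\length(h_j(s))\ge W-\delta_0$ we have $\length(Dh_j(s))\le\length(h_j(s))-\eta\le W_j-\eta$, while for the remaining $s$ we have $\length(Dh_j(s))\le\length(h_j(s))<W-\delta_0$; therefore $\sup_s\length(Dh_j(s))\le\max\{W_j-\eta,\,W-\delta_0\}<W$ once $j$ is large, a contradiction. So $X$ contains a non‑constant periodic geodesic.

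The step I expect to be the main obstacle is the construction of the compact model $\mathcal Q$ together with the verification that $D$ is a continuous self‑map of it with continuous length functional — the properties that make the minimax "regular" enough to force an actual periodic geodesic. In the classical smooth setting the analogous statements are routine; here they rest essentially on the local unique geodesicity hypothesis, both to make short shortest paths (hence $D$) continuous via Lemma~\ref{close}, and to make short loops contractible for the positivity of $W$. One must also be a little careful about the meaning of "periodic geodesic" for loops that are not embedded, and about the bookkeeping needed to run Birkhoff's process over a whole family with one uniform choice of the integer parameter.
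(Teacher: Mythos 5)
Your proposal is correct in substance, but it takes a genuinely different route from the paper. The paper does not set up a minimax over all admissible sweepouts: it fixes the single family $h^f$ coming from the foliation, applies the Birkhoff map $D$ (with the one integer $k$ adapted to $f$) to the whole family repeatedly, lets $c_k$ be the maximal length at stage $k$, shows $c_k\downarrow c\ge\epsilon$ by the same coning argument you use for $W>0$, and then extracts, via the Lipschitz bound $\tfrac{k\epsilon}{2}$ from the speed estimate and Arzela--Ascoli, a uniform limit $g$ of the length-maximizing circles $g_k$ at consecutive stages; this limit is the periodic geodesic. Thus the paper needs no compact loop-space model $\mathcal{Q}$, no continuity of $D$ and of $\length$ on such a space, no ``equality of lengths forces a geodesic'' dichotomy, and no uniform decrement $\eta$ near the top level --- exactly the infrastructure you flag as the main obstacle. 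What your route buys: it is the classical width/minimax argument, it locates the geodesic at the level $W$ independently of the particular sweepout induced by $f$, and it makes explicit the strict-decrease-unless-geodesic mechanism that the paper leaves implicit in its terse final step (the assertion that $\length(g)=c$ and that $g$ is a periodic geodesic in effect uses continuity of length along piecewise-shortest limits and the same equality analysis you spell out). What the paper's route buys: by never taking a minimizing sequence of families, it avoids the fiddliest point of your plan, namely pushing an arbitrary near-optimal family into $\mathcal{Q}$ with a single uniform $k_0$ --- which, as you half-acknowledge, really needs a two-step polygonization (first a fine, $j$-dependent subdivision from uniform continuity of the family, then re-division at equal arc length to get at most $2k_0$ pieces) together with a reparametrization homotopy that behaves correctly on the degenerate boundary circles. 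Both proofs ultimately rest on the same two pillars: the coning contraction of families of loops of length less than $\epsilon$ (positivity of the critical level) and Lemma \ref{close} (continuity of shortest paths, hence of the Birkhoff process, on short pieces).
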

\proof
Let's say that $X$ is $\epsilon $-locally uniquely geodesic.

We treat first the $n=1$ case. Since $X$ is locally uniquely geodesic there is a non-contractible
closed path $\gamma :[0,1]\to X$ of finite length. We note that any closed curve
of length $\leq \epsilon $ is contractible. Indeed by lemma \ref{close} if we join a base point
of the curve to the other points we obtain a contraction.

It follows that $\length (\gamma )>\epsilon$. Let $c$ by the infimum of the lengths
of non-contractible closed paths in $X$. If $\gamma _n$ is a sequence of such closed paths
such that $\length (\gamma _n)\to c$ then since $X$ is compact by passing to a subsequence
we get that $\gamma _n\to \beta $ and by lemma \ref{close} $\beta$ is a periodic geodesic.

We assume now that $n\geq 2$. Let $f:S^n\to X$ be a non-contractible map.
We consider the foliation of $S^n$ by circles defined in the previus section and set
$f_{x_1,...,x_{n-1}}= f\circ\phi_{x_1,...,x_{n-1}}:[0,1]\to X$, the restriction of $f$ to one of these circles.
Since $S^n$ is compact and $f$ is continuous there is some $k\in \mathbb N$
such that the diameter of $$f_{x_1,...,x_{n-1}}([ t, t+ 1/k])$$ is bounded by $\epsilon /2$
for all $x_1,...,x_{n-1}$ with $x_1^2+\dots +x_{n-1}^2 \le 1$ and for all $t$ (taken mod 1).

Abusing notation slighlty we write below sometimes $\bar f$ instead of $f_{x_1,...,x_{n-1}}$
to simplify notation.
We apply the Birkhoff shortening process to each $\bar f:[0,1]\to X$ and we obtain
a homotopic curve of finite length $D\bar f$. By lemma \ref{close}, $D$ applied to
each circle induces a  map from $S^n$ to $X$ homotopic to $f$ which we denote by$Df$.

We define inductively $D^kf=D\circ D^{k-1}f$ and define $D^k\bar f$ similarly.

Let 

$$c_k= \max \{\length (D^kf_{x_1,...,x_{n-1}}): x_1^2+\dots +x_{n-1}^2 \le 1\}.$$

Clearly $c_k$ is decreasing. Let $c=\lim c_k$.

We claim that $c\geq \epsilon $. Indeed if not $c_k<\epsilon $ and then we can contract all $D^k\bar f$ to their base points by a homotopy by lemma \ref{close}. This homotopy is continuous on $S^n$
by lemma \ref{close}.

Since the set of basepoints of the circles
of the foliation is a topological disc this shows that $D^nf$ is homotopically trivial, a contradiction since $D^nf$ is homotopic to $f$.

Let $\bar f _k:[0,1]\to X$ be curve realizing $c_k$, so  $ \bar f_k =D^kf_{x_1,...,x_{n-1}}$  for some fixed $x_1^2+\dots +x_{n-1}^2 \le 1$.
Let $g_{k-1} = D^{k-1}f_{x_1,...,x_{n-1}}$, so $Dg_{k-1} = \bar f_k$.
 We have
$c_k\leq \length (g_{k-1})\leq c_{k-1}.$

We must now show that the sequence $(g_k)$ is equicontinuous.  We do this by showing that there is an $\mu>0$ such that $g_k$ is $\mu$-Lipschitz for all $k>0$.  We already know that if $\beta:[0,1]\to X$ is Lipschitz with constant $\mu$ then so is $D(\beta)$.   Fix $x_1, \dots, x_{n-1}$ such that $x_1^2 +\dots +x_{n-1}^2 \le 1$. By equation \ref{Eq:speed} in the Birkhoff shortening process, $Df_{x_1, \dots, x_{n-1}}$ is Lipschitz with constant 
$$ k \max\limits_{0\le i<k}  d\left(f_{x_1, \dots, x_{n-1}}\left(\frac {2i}{2k}\right), f_{x_1, \dots, x_{n-1}}\left(\frac {2i+2}{2k}\right)\right).$$
By our hypothesis on $k$ $$d\left(f_{x_1, \dots, x_{n-1}}\left(\frac {2i}{2k}\right), f_{x_1, \dots, x_{n-1}}\left(\frac {2i+2}{2k}\right)\right)< \frac \epsilon 2$$
It follows that $Df_{x_1, \dots, x_{n-1}}$ is Lipschitz with constant $\frac {k\epsilon}2$.  This is independent of the choice of $x_1, \dots x_{n-1}$, and since $D$ of a Lipschitz $\frac {k\epsilon}2$ function is a Lipschitz $\frac {k\epsilon}2$ function, it follows that $g_k$ is Lipschitz with constant $\frac {k\epsilon}2$ for all $k>0$.

Thus by Arzela-Ascoli, passing to a subsequence we may assume that $g_k$ converges uniformly to a curve $g:[0,1]\to X$.
Clearly $\length (g)=c$ and by lemma \ref{close} $g$ is a periodic geodesic.

\qed

\begin{Cor} 1. Let $X$ be a compact $CAT(\kappa )$ manifold.
Then $X$ contains a periodic geodesic.

2.  Let $X$ be a compact $CAT(\kappa )$ polyhedron such that $\pi _n(X)\ne 0$
for some $n\geq 1$. Then $X$ contains a periodic geodesic.

3. Let $X$ be a finite dimensional non-contractible compact locally uniquely geodesic space. Then $X$ contains a periodic geodesic.
\end{Cor}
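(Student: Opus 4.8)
The plan is to deduce all three statements from the theorem above by checking its two hypotheses: that $X$ is compact and locally uniquely geodesic, and that $\pi_n(X)\ne 0$ for some $n\ge 1$. Compactness is assumed throughout. Local unique geodesicity holds in parts~1 and~2 by the Remark following Lemma~\ref{close}, since a $CAT(\kappa)$ space is locally uniquely geodesic (and if one only assumes $X$ is \emph{locally} $CAT(\kappa)$, compactness makes the local constant uniform), and it is assumed in part~3. So in every case the only real content is the nontriviality of some homotopy group, and for part~2 there is nothing further to prove, as this is part of the hypothesis.

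For part~1 write $d=\dim X$, which we may assume is at least $1$, and (replacing $X$ by a connected component if necessary) assume $X$ is connected; here ``manifold'' is understood to mean a \emph{closed} manifold, for which the $\Z/2\Z$ fundamental class gives $H_d(X;\Z/2\Z)\cong\Z/2\Z$, so $X$ is not contractible. (With boundary the statement is false: a small flat disc is $CAT(\kappa)$ but has no periodic geodesic.) Since a compact topological manifold has the homotopy type of a CW complex, if all homotopy groups of $X$ vanished then the inclusion of a point would be a weak homotopy equivalence and hence, by Whitehead's theorem, a homotopy equivalence, contradicting $H_d(X;\Z/2\Z)\ne 0$. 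Thus $\pi_n(X)\ne 0$ for some $n\ge 1$ and the theorem applies; using $\Z/2\Z$ coefficients avoids any orientability hypothesis.

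For part~3 the task is again to upgrade ``$X$ non-contractible'' to ``$\pi_n(X)\ne 0$ for some $n$'', i.e.\ to show that $X$ has the homotopy type of a CW complex, and this is the step requiring the most care. First, $X$ is locally contractible: if $X$ is $\epsilon$-locally uniquely geodesic, $x_0\in X$ and $0<r\le\epsilon/2$, then each $x\in B(x_0,r)$ is joined to $x_0$ by a unique shortest path $\sigma_x$, every point of which lies within $d(x,x_0)<r$ of $x_0$ and hence in $B(x_0,r)$; parametrising $\sigma_x$ proportionally to arc length on $[0,1]$, the map $(x,t)\mapsto\sigma_x(t)$ deformation-retracts $B(x_0,r)$ onto $x_0$, and it is jointly continuous because $x\mapsto\sigma_x$ is continuous in the uniform metric by the remark after Lemma~\ref{close} while the paths $\sigma_x$ are uniformly Lipschitz. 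Hence $X$ is a compact metric, finite-dimensional, locally contractible space, so by a classical theorem in the theory of retracts it is an ANR, and therefore (Milnor) it has the homotopy type of a CW complex. Since $X$ is path-connected, being a geodesic space, and not contractible, Whitehead's theorem yields $\pi_n(X)\ne 0$ for some $n\ge 1$, and the theorem applies.
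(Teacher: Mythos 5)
Your proposal is correct, and parts 2 and 3 follow essentially the paper's own route: part 2 is immediate from the Theorem, and for part 3 you argue exactly as the paper does (geodesic ``straight line'' contraction of small balls gives local contractibility, Borsuk's theorem gives ANR, Milnor gives the homotopy type of a CW complex, and Whitehead plus non-contractibility gives $\pi_n(X)\ne 0$); you even add a worthwhile detail the paper glosses over, namely the joint continuity of the contraction $(x,t)\mapsto\sigma_x(t)$ via Lemma \ref{close} and the uniform Lipschitz bound. The only genuine divergence is in part 1. The paper passes to an orientable double cover to get $H_n(X;\Z)=\Z$ for $n=\dim X$ and then applies the Hurewicz theorem directly to conclude $\pi_k(X)\ne 0$ for some $0<k\le n$; you instead use the $\Z/2\Z$ fundamental class to see $X$ is non-contractible and then invoke the fact that a compact topological manifold has the homotopy type of a CW complex together with Whitehead's theorem. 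Both work: your route avoids the orientability/double-cover step (and the implicit point that a periodic geodesic upstairs descends, or that $\pi_k$ of the cover detects $\pi_k$ of $X$), while the paper's route avoids any appeal to CW homotopy type of manifolds, since Hurewicz needs no CW structure -- indeed, with your $\Z/2\Z$ class you could likewise have concluded via Hurewicz and universal coefficients, or simply observed that a closed manifold satisfies the hypotheses of part 3. One minor caveat in both arguments: ``manifold'' must mean closed and of positive dimension (your flat-disc remark, and the trivial case of a point), which you at least flag explicitly.
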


\proof
For part 1, taking a double cover if need be, we may assume that $X$ is orientable and connected, and so $H_n(X) =\Z$ where $n = \dim X$.  By the Hurewicz Theorem, $\pi_k(X)$ is non trivial for some $0<k\le n$, and the result follows from the Theorem.

Part  2 follows directly from the Theorem.

For part 3, let's assume that $X$ is $\epsilon$-locally uniquely geodesic.  Then every metric ball of radius at most $\epsilon$ is contractible via the ``straight line" contraction.  Thus $X$ is locally contractible.   By \cite[Corollary V 10.4]{BOR}, $X$ is an ANR (absolute neighborhood retract).  
The metric space $X$ is separable since it is compact.  It now follows from \cite[Theorem 1]{MIL} that $X$ is homotopy equivalent to a countable CW complex.  
(The infinite dimensional case  would also work if metric balls of radius less than $\epsilon$ were convex via \cite[Lemma 4]{MIL}.)
Hence by Whitehead's Theorem $\pi _n(X)\ne 0$ for some $n$,  and the Theorem applies.

\qed

\begin{Rem} It is worth noting that the class of locally uniquely geodesic spaces is strictly larger than the class
of $CAT(\kappa )$ spaces. To see this pick a sequence of small spherical caps $C_n$  in spheres 
of radius $1/2^n$ drill holes on a plane (or a sphere) converging to a point so that the distance
of successive holes is $1/n$ and glue the caps $C_n$ to these holes. The resulting space is locally uniquely geodesic but not $CAT(\kappa )$.
\end{Rem}

\end{document}
\bye